\newcommand\munderbar[1]{%
 \underaccent{\bar}{#1}}
\newtheorem{theorem}{Theorem}[section]
\newtheorem{corollary}{Corollary}[theorem]
\newtheorem{lemma}[theorem]{Lemma}
\newtheorem{proposition}[theorem]{Proposition}
\newtheorem*{assumption}{Assumption}
\newcommand{\Na}{{N_{\rm a}}}
\def\BibTeX{{\rm B\kern-.05em{\sc i\kern-.025em b}\kern-.08em
    T\kern-.1667em\lower.7ex\hbox{E}\kern-.125emX}}
\begin{document}

\title{Control of Agreement and Disagreement Cascades with Distributed Inputs
\thanks{Supported by ONR grant N00014-19-1-2556, ARO grant W911NF-18-1-0325, DGAPA-UNAM PAPIIT grant IN102420, and Conacyt grant A1-S-10610, and by  
NSF Graduate Research Fellowship  
DGE-2039656.}
}

\author{Anastasia~Bizyaeva, Timothy~Sorochkin,
        Alessio~Franci,
        and~Naomi~Ehrich~Leonard
\thanks{A. Bizyaeva and N.E. Leonard are with the Department
of Mechanical and Aerospace Engineering, Princeton University, Princeton,
NJ, 08544 USA; e-mail: bizyaeva@princeton.edu, naomi@princeton.edu.}
\thanks{T. Sorochkin is with the Department of Physics \& Astronomy, University of Waterloo, Waterloo, ON, N2L 3G1 Canada; e-mail: tsorochkin@uwaterloo.ca }
\thanks{A. Franci is with the Department of Mathematics, National Autonomous University of Mexico, 04510 Mexico City, Mexico. e-mail: afranci@ciencias.unam.mx}}

\maketitle

\begin{abstract}
For a group of autonomous communicating agents, the ability to distinguish a meaningful input from disturbance, and come to collective agreement or disagreement in response to that input, is paramount for carrying out coordinated objectives. In this work we study how a cascade of opinion formation spreads through a group of networked decision-makers in response to a distributed input signal. Using a nonlinear opinion dynamics model with dynamic feedback modulation of an attention parameter, we show how the triggering of an opinion cascade and the collective decision itself depend on both the distributed input and the node agreement and disagreement centrality, determined by the spectral properties of the network graph. We further show how the attention dynamics introduce an implicit threshold that distinguishes between distributed inputs that trigger cascades and ones that are rejected as disturbance.

\end{abstract}


\section{Introduction}

Emerging technologies rely on network communications and sensor input in order to make coherent collective decisions. For example, autonomous multi-robot teams must cooperate to move as a group, avoid obstacles and one another, and perform collective tasks in potentially dynamic and uncertain environments. These objectives necessarily involve on-the-fly collective decision-making about context-dependent options, such as which of multiple available paths to take, which direction to turn, or how to distribute available tasks among the team members. There is urgent need for a unified design framework that enables autonomous teams to coordinate  decisions across different contexts in a distributed manner. Mathematical models of networked opinion dynamics, e.g. \cite{Peng2015,Parsegov2017,Fontan2018,Basar2018,Valcher2020}, are particularly useful for this purpose, in part due to their relative simplicity and analytic tractability. 

With this as motivation, we focus in this paper on a general model of distributed opinion formation on a network recently introduced in \cite{Bizyaeva2020, Franci2020}. In this multi-agent, multi-option framework, opinion formation is treated as a nonlinear process in which agents update their real-valued opinions in continuous time in response to saturated information exchanges from their communication network. A prominent feature of this model is the emergence of consensus and dissensus solutions in easily identifiable parameter regimes, even when all of the agents are homogeneous and the communication network is all-to-all. These analytic results in \cite{Bizyaeva2020} are independent of the number of agents or options. Analysis in \cite{BizyaevaACC2020}  suggests that emergence of agreement and disagreement, through bifurcations, persists across different network topologies. This makes our model flexible and  adaptable to different contexts and applications. The spectral properties of the network adjacency matrix are key to the agreement and disagreement bifurcation structure. The bifurcation kernel is generated by the dominant eigenvector in the agreement regime and by the eigenvector associated to the smallest eigenvalue in the disagreement regime. The entries of these eigenvectors define agreement and disagreement centrality indices for the nodes of the network in symmetric graphs~\cite{Franci2021}. For digraphs, node centrality is defined by the left adjacency matrix eigenvectors associated to the largest (agreement case) or smallest (disagreement case) eigenvalues.

We exploit the connection between the spectral properties of the network adjacency matrix and the agreement and disagreement bifurcation behavior to derive 
criteria 
for designing distributed inputs to control opinion formation and opinion cascades. Using Lyapunov-Schmidt reduction methods~\cite[Chapter VII]{Golubitsky1985}, we show that the agreement (disagreement) centrality of a node determines how 
an input to it affects the agreement (disagreement) bifurcation behavior. Our results rigorously justify 
the agreement and disagreement centralities introduced in~\cite{Franci2021}. When the opinion dynamics are coupled with the feedback attention dynamics introduced in~\cite{Bizyaeva2020}, sufficiently large inputs can trigger an opinion cascade at which the network state transitions from a weakly opinionated state to a strongly opinionated state. Agreement and disagreement centrality indices predict sensitivity of opinion cascades to distributed inputs.
The more aligned the input vector is with the centrality vector, the smaller the  inputs need to be to trigger a cascade.

We present the model Section~\ref{sec:model} and review Lyapunov-Schmidt reduction in Section~\ref{sec:LS}. We prove the role of distributed input on opinion formation behavior for constant attention in Section~\ref{sec:constantu} and for dynamic feedback controlled attention in Section~\ref{sec:dynamicu}. 



\section{Opinion Dynamics Model} 
\label{sec:model}
    
We study a model of $N_a$ agents communicating over a network and forming opinions on two options through a nonlinear process specialized from the multi-option general model in \cite{Bizyaeva2020},\cite{Franci2020}. As in \cite{BizyaevaACC2020}, we specialize to agents that are homogeneous with respect to three fixed parameters in the dynamics: the rate of forgetting (damping coefficient $d >0$), the edge weight in the communication network ($\gamma \in \mathds{R}$), and the strength of self-reinforcement of opinion ($\alpha \geq 0$). In \cite{BizyaevaACC2020}, we focused on the zero-input setting, i.e., the case in which there is no stimulus, evidence or bias that informs the agents about the relative merits of the options.  Instead, here, we consider an input $b_i \in \mathds{R}$, for each agent $i = 1, \ldots, N_a$, and allow the inputs to be heterogeneously distributed over the network of agents.  We further model heterogeneity over the agents in their attention to network exchange.  

The topology of agent interactions is encoded in a graph $G = (V,E)$ where $V = \{1, \dots, N_{a} \}$ is an index set of vertices. Each vertex $i \in V$ represents an agent, and $E \subseteq V \times V$ is the set of edges, which represent interactions between agents. We define the unweighted graph adjacency matrix $A = (a_{ik})$, $i,k \in V$, with elements satisfying $a_{ik} = 1$ if and only if $e_{ik} \in E$, and $a_{ik} = 0$ otherwise. We consider this matrix without self-loops, $a_{ii} = 0$ for all $i \in V$. $G$ is an \textit{undirected} graph 
if $a_{ik} = a_{ki}$ for all $i,k \in V$. Let $\lambda_i$, $i = 1, \ldots, N_a$, be the eigenvalues of $A$ and $W(\lambda_i)$  the generalized eigenspace associated to $\lambda_i$. We define $\lambda_{max}$ 
and $\lambda_{min}$ to be the $\lambda_i$ with largest and smallest real parts, respectively, and $\mathbf{v}_{max}$ and $\mathbf{v}_{min}$ to be the corresponding unit left eigenvectors.


In the two-option setting, we represent the opinion of each agent $i$ by a real-valued $x_i \in \mathds{R}$. The sign of $x_i$ corresponds to agent $i$ favoring option 1 ($x_{i} > 0$) or favoring option 2 ($x_i < 0$). The magnitude of the opinion variable $x_i$ describes the strength of agent $i$'s commitment to an option. We say an agent is \textit{opinionated} when $|x_{i}| > \vartheta > 0$ for some threshold $\vartheta$, and it is \textit{unopinionated} otherwise. Note that we introduce the threshold $\vartheta$ in order to distinguish a small opinion from a large opinion in our discussion of the results, and the threshold value itself will not play a role in the analysis (and can be made appropriately small in each case). The vector of agents' opinions $\mathbf{x} = (x_{1}, \dots, x_{N_{a}}) \in \mathds{R}^{N_{a}}$ is the {\em network opinion state}. When all agents are unopinionated the group is in an {\em unopinionated state}; a special case is the \textit{neutral state} $\mathbf{x} = \mathbf{0}$. 
We distinguish between key opinionated network states: 
\begin{itemize}
    \item Any pair of agents $i,k \in V$ \textit{agree} when they are opinionated and favor the same option, i.e. $\operatorname{sign}(x_{i}) = \operatorname{sign}(x_{k})$. The group is in an \textit{agreement state} when this property holds for all $i,k \in V$. 
    \item Any pair of agents $i,k \in V$ \textit{disagree} when they are opinionated and favor different options, i.e. $\operatorname{sign}(x_{i}) \neq \operatorname{sign}(x_{k})$.  The group is in a \textit{disagreement state} when this property holds for at least one pair of agents $i,k \in V$. 
\end{itemize}

Each agent updates its own opinion state in continuous time according to the nonlinear update rule: 
\begin{equation}
    \dot{x}_{i} = - d x_{i} +  u_{i} S\left(\alpha x_{i} + \gamma \sum_{\substack{k=1 \\ k \neq i}}^\Na a_{ik} x_{k}\right) + b_{i}  \label{eq:opinion_dynamics}.
\end{equation}
The rule has four parts: a damping term with coefficient $d>0$, a nonlinear interaction term that includes inter-agent exchanges with weight $\gamma \in \mathds{R}$, an opinion self-reinforcement term with weight $\alpha \geq 0$, and an additive input $b_i \in \mathds{R}$. 

The nonlinearity applied to the inter-agent exchanges and self-reinforcement is defined by an odd sigmoidal saturating function $S$ which satisfies $S(0)=0$, $S'(0)=1$, and $\operatorname{sign}(S''(z)) = -\operatorname{sign}(z)$. This is motivated from the literature 
and means that agent $i$ is more influenced by opinion fluctuations in its neighbors when their average opinion is close to neutral, and as neighbors' opinions grow large on average their influence levels off. In simulations and analysis throughout this paper we use $S = \tanh$. We purposely leave the sigmoid more general in the definition of the opinion dynamics \eqref{eq:opinion_dynamics} because the results in this paper generalize to arbitrary odd sigmoidal functions with minor modifications in the algebraic details of the proofs.






In the following proposition we specialize a result from \cite{BizyaevaACC2020} which serves as a starting point for our analysis.

\begin{proposition}[\cite{BizyaevaACC2020}, Theorem 1]
The following hold true for \eqref{eq:opinion_dynamics} with $u_{i} := u \geq 0$ and $b_{i} = 0$ for all $i = 1, \dots, N_{a}$: \\
\textbf{A. Cooperation leads to agreement:}  Let $G$ be a connected undirected graph. If $\gamma > 0 $, the neutral state $\mathbf{x} = 0$ is a locally exponentially stable equilibrium for $0 < u < u_{a}$ and unstable for $u >u_{a}$, with
    \begin{equation}
        u_{a} = \frac{d}{\alpha + \gamma \lambda_{max}}. \label{eq:u_agree}
    \end{equation}
    At $u = u_{a}$, branches of agreement equilibria, $x_{i} \neq 0$, $\operatorname{sign}(x_{i}) = \operatorname{sign}(x_{k})$ for all $i,k \in V$, emerge in a steady-state bifurcation off  of $\mathbf{x} = \mathbf{0}$
    along $W(\lambda_{max})$; \\
\textbf{B. Competition leads to disagreement:} Let $G$ be a connected undirected graph. If $ \gamma < 0 $ the neutral state $\mathbf{x} = \mathbf{0}$ is a locally exponentially stable equilibrium for $0 < u < u_{d}$ and unstable for $u >u_{d}$, with
    \begin{equation}
        u_{d} = \frac{d}{\alpha +\gamma \lambda_{min}} . \label{eq:u_disagree}
    \end{equation}
    At $u = u_{d}$, branches of disagreement equilibria, $\operatorname{sign}(x_{i}) = - \operatorname{sign}(x_{k})$ for at least one pair $i,k \in V $, $i \neq k$, emerge in a steady-state bifurcation off  of $\mathbf{x} = \mathbf{0}$ 
    along $W(\lambda_{min})$.
\label{propACC}
\end{proposition}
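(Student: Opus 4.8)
The plan is to linearize \eqref{eq:opinion_dynamics} at the neutral state, locate the rightmost eigenvalue to obtain the stability thresholds, and then apply Lyapunov-Schmidt reduction to characterize the bifurcating branches. First I would note that $\mathbf{x} = \mathbf{0}$ is an equilibrium for every $u$: since $S(0) = 0$ and all $b_i = 0$, each right-hand side vanishes. Differentiating and using $S'(0) = 1$, the Jacobian at the origin is
$$J(u) = (-d + u\alpha)I + u\gamma A,$$
so $J(u)$ shares the eigenvectors of $A$, and whenever $\lambda_i$ is an eigenvalue of $A$ the matrix $J(u)$ has the eigenvalue $\mu_i(u) = -d + u(\alpha + \gamma\lambda_i)$. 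Since $G$ is undirected, $A$ is symmetric and all $\lambda_i$ are real.

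To prove the stability claims I would identify which $\mu_i$ is rightmost. When $\gamma > 0$ the map $\lambda_i \mapsto \alpha + \gamma\lambda_i$ is increasing, so the dominant eigenvalue is $\mu_{max}(u) = -d + u(\alpha + \gamma\lambda_{max})$; it is negative exactly for $u < u_a$, crosses zero at $u = u_a$, and is positive for $u > u_a$, giving local exponential stability below the threshold and instability above it. When $\gamma < 0$ the same map is decreasing, so the dominant eigenvalue instead comes from $\lambda_{min}$ and the identical argument yields the threshold $u_d$. Connectedness of $G$ makes $A$ nonnegative and irreducible, so Perron-Frobenius gives that $\lambda_{max}$ is simple with a strictly positive unit eigenvector $\mathbf{v}_{max}$; at $u = u_a$ the kernel of $J$ is therefore exactly $W(\lambda_{max})$, and at $u = u_d$ it is $W(\lambda_{min})$.

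For the bifurcation and the sign structure of the branches I would use Lyapunov-Schmidt reduction along the critical kernel. Projecting the equilibrium condition near $(\mathbf{0}, u_a)$ (resp.\ $(\mathbf{0}, u_d)$) onto the kernel reduces it to a bifurcation equation whose solutions are tangent to $W(\lambda_{max})$ (resp.\ $W(\lambda_{min})$); the oddness of $S$ makes the system $\mathbb{Z}_2$-equivariant under $\mathbf{x} \mapsto -\mathbf{x}$, forcing a pitchfork, and the transversality $\tfrac{d\mu_{max}}{du} = \alpha + \gamma\lambda_{max} \neq 0$ (resp.\ with $\lambda_{min}$) together with $S'''(0) \neq 0$ guarantees a genuine steady-state branch. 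Since the branch is tangent to the critical kernel, for small amplitude the equilibrium lies approximately in it. In the agreement case the kernel is spanned by the strictly positive $\mathbf{v}_{max}$, so $\operatorname{sign}(x_i) = \operatorname{sign}(x_k)$ for all $i,k$. In the disagreement case, every nonzero $w \in W(\lambda_{min})$ is orthogonal to $\mathbf{v}_{max}$ by symmetry of $A$, so $\sum_i (\mathbf{v}_{max})_i\, w_i = 0$ with $\mathbf{v}_{max}$ strictly positive forces $w$ to have entries of both signs, giving $\operatorname{sign}(x_i) \neq \operatorname{sign}(x_k)$ for at least one pair.

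The main obstacle is the final step: promoting the infinitesimal sign pattern to the full nonlinear branch. The tangency only controls leading order, so one must bound the higher-order Lyapunov-Schmidt corrections and verify they do not flip the sign of any component in a neighborhood of the bifurcation. The Perron-Frobenius spectral gap makes the agreement case robust, since $\mathbf{v}_{max}$ is bounded away from zero componentwise; the disagreement case is more delicate, as components of a vector in $W(\lambda_{min})$ can be arbitrarily close to zero and are thus more sensitive to the nonlinear corrections, so controlling these is where the bulk of the technical effort lies.
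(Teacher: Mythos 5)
Your overall route---linearize at $\mathbf{x}=\mathbf{0}$, read the thresholds off the eigenvalues $\mu_i(u) = -d + u(\alpha + \gamma\lambda_i)$, then apply Lyapunov--Schmidt along the critical kernel---is exactly the machinery this paper itself deploys (the Jacobian \eqref{eq:jac} and Theorem~\ref{thm:pitchfork} with Corollary~\ref{cor:criticality}; the proposition itself is imported from \cite{BizyaevaACC2020} without proof here). The stability halves of both parts A and B are correct, and the agreement bifurcation is essentially complete: Perron--Frobenius gives simplicity of $\lambda_{max}$ and a strictly positive $\mathbf{v}_{max}$, the kernel is one-dimensional, oddness of $S$ forces a pitchfork, and since the entries of $\mathbf{v}_{max}$ are bounded away from zero the tangency controls the sign pattern along the whole small-amplitude branch.

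The genuine gap is in the disagreement case: Perron--Frobenius says nothing about $\lambda_{min}$, and $\lambda_{min}$ is frequently \emph{not} simple---for the complete graph $K_{N_a}$ it equals $-1$ with multiplicity $N_a - 1$, and degeneracy also occurs for complete bipartite and many regular graphs. This is precisely why the proposition is stated as a steady-state bifurcation ``along $W(\lambda_{min})$,'' a possibly multidimensional space, rather than as a pitchfork along a single eigenvector. Your scalar reduction with the $\mathbf{x}\mapsto-\mathbf{x}$ symmetry ``forcing a pitchfork'' requires a one-dimensional kernel; when $\dim W(\lambda_{min})>1$ the reduced bifurcation equation is multidimensional and existence of branches must instead be obtained by equivariant methods (e.g., the equivariant branching lemma, the route taken in \cite{Bizyaeva2020,BizyaevaACC2020}). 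Note also that your closing worry is aimed at the wrong step: for the claim that \emph{at least one pair} disagrees, your orthogonality argument is already robust, since any nonzero $w\perp\mathbf{v}_{max}$ has at least one strictly positive and one strictly negative entry, and along any fixed limiting direction those entries are bounded away from zero, so higher-order Lyapunov--Schmidt corrections cannot flip them. The real technical burden in part B is the kernel dimension, not sign robustness. A minor additional point: the threshold $u_d>0$ is meaningful only when $\alpha + \gamma\lambda_{min} > 0$; if $\alpha+\gamma\lambda_{min}\leq 0$ the origin never loses stability for $u\geq 0$, a caveat implicit in the statement that your argument should acknowledge.
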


\section{Lyapunov-Schmidt Reduction}
\label{sec:LS}

To systematically characterize the equilibria of the opinion dynamics model as a function of parameters, we leverage the Lyapunov-Schmidt reduction and its use in computing bifurcation diagrams. Consider the $n-$dimensional dynamical system $\dot{\mathbf{y}} = \Phi(\mathbf{y},\mathbf{p})$, where  $\Phi: \mathds{R}^n \times \mathds{R}^m \rightarrow \mathds{R}^n$ is a smooth parameterized vector field, $\mathbf{y} \in \mathds{R}^{n}$ is a state vector, 
and $\mathbf{p} \in \mathds{R}^{m}$ is a vector of parameters. Let the $k^{th}$ order derivative of $\Phi$ at $(\mathbf{y},\mathbf{p})$ be
\begin{multline}
    (d^{k}\Phi)_{\mathbf{y},\mathbf{p}}(\mathbf{v}_{1}, \dots, \mathbf{v}_{k}) \\
    = \left.\frac{\partial}{\partial t_1} \dots \frac{\partial}{\partial t_k} \Phi\left(\mathbf{y} + \sum_{i = 1}^{k} t_{i} \mathbf{v}_{i}, \mathbf{p} \right)\right|_{t_{1} = \dots = t_{k} = 0}.
\end{multline}
The equilibria of  $\dot{\mathbf{y}} = \Phi(\mathbf{y},\mathbf{p})$  are the level sets $\Phi(\mathbf{y},\mathbf{p}) = 0$, which defines the \textit{bifurcation diagram} of the system. 

The Jacobian of the system is the matrix $J$ with elements $J_{ij} = \frac{\partial  \Phi(\mathbf{y},\mathbf{p})}{ \partial y_{ij}}$. When $J$ evaluated at an equilibrium point $(\mathbf{y}^*,\mathbf{p}^*)$ is degenerate (i.e. has rank $n - m$ where $0 < m < n$), the local bifurcation diagram can be described using $m$ variables and the point is a \textit{singular point}. The \textit{Lyapunov-Schmidt reduction} of $\Phi(\mathbf{y},\mathbf{p})$ is an $m$-dimensional system of equations that captures the structure of the local bifurcation diagram of the system near $(\mathbf{y}^*,\mathbf{p}^*)$. The procedure for deriving the Lyapunov-Schmidt reduction involves projecting the Taylor expansion of $\Phi(\mathbf{y},\mathbf{p})$ onto the kernel of its Jacobian at the singularity. The Implicit Function Theorem is used to solve for $n-m$  variables as function of the remaining $m$, thus approximating the local vector field 
in the directions orthogonal to the kernel. For details on  Lyapunov-Schmidt reduction see \cite[Chapter VII]{Golubitsky1985}.

The \textit{normal form} for a bifurcation is the simplest equation that captures all  qualitative features of the bifurcation diagram. Dynamical systems with an odd state symmetry $\Phi(-\mathbf{y},\mathbf{p}) = - \Phi(\mathbf{y},\mathbf{p})$ often exhibit a \textit{pitchfork bifurcation}. A normal form for a pitchfork bifurcation universal unfolding is 
\begin{equation}
    \dot{y} = p_1 y \pm y^{3} + p_2 + p_3 y^2 \label{eq:NF_unfolded_pitchfork}
\end{equation}
where $y\in \mathds{R}$ is the reduced state, $p_1$ is a \textit{bifurcation parameter} and $p_2$, $p_3$ are \textit{unfolding parameters}. When $p_2 = p_3 = 0$, the symmetric pitchfork normal form is recovered in \eqref{eq:NF_unfolded_pitchfork}. When one of the unfolding parameters is nonzero, it follows from unfolding theory \cite[Chapter III]{Golubitsky1985} that the bifurcation diagram changes locally to one of four possible topologically distinct configurations (see Fig. \ref{fig:bif}). 
We use the tools outlined in this section to rigorously study how solutions of the distributed dynamics \eqref{eq:opinion_dynamics} depend on each agent's attention and input.

\section{Constant Attention: Sensitivity to Input near Critical Point}
\label{sec:constantu}

In this section, we investigate how a vector of constant inputs $\mathbf{b}$ informs the outcome of the opinion formation process \eqref{eq:opinion_dynamics} 
when attention is constant and $u_i := u \in \mathds{R}$ for all $i = 1, \dots, N_a$. The Jacobian  of \eqref{eq:opinion_dynamics} evaluated at  $\mathbf{x} = 0$ is 
\begin{equation}
    J_x =  (u \alpha - d) \mathcal{I} + u \gamma A \label{eq:jac}
\end{equation}
with identity matrix $\mathcal{I}$. 
The dynamics \eqref{eq:opinion_dynamics} in vector form are 
\begin{equation}
    \dot{\mathbf{x}}= - d \mathbf{x} + u \mathbf{S}\left( (\alpha \mathcal{I} + \gamma A) \mathbf{x}\right) + \mathbf{b}:= F(\mathbf{x},u,\mathbf{b}) \label{eq:dynamics_vectors}
\end{equation}
where  $\mathbf{S}(\mathbf{y}) = (S(y_{1}), \dots, S(y_{n}))$, $\mathbf{y} \in \mathds{R}^{n}$, and $\mathbf{b} = (b_{1}, \dots, b_{N_{a}})$. The following theorem generalizes results in \cite[Theorem 1]{Gray2018} to describe 
bifurcations of the opinion dynamics of homogeneous agents. The theorem shows that any bifurcation of $\mathbf{x} = 0$ of \eqref{eq:opinion_dynamics} that is generated by a simple eigenvalue of the adjacency matrix $A$ must be a pitchfork bifurcation. 

\begin{theorem}[Pitchfork Bifurcation] \label{thm:pitchfork} Consider \eqref{eq:opinion_dynamics} and define $u^{*} = \frac{d}{\alpha + \lambda \gamma }$, where $\lambda$ is a simple real eigenvalue of the adjacency matrix $A$ for a strongly connected graph $G$ with corresponding right unit eigenvector $\mathbf{v} = (v_1, \dots, v_{N_a})$ and corresponding left unit eigenvector $\mathbf{w} = (w_1, \dots, w_{N_a})$. Assume that  (i) for all eigenvalues $\xi \neq \lambda$ of $A$, $\operatorname{Re}[\xi] \neq \lambda$; (ii) $\alpha + \lambda \gamma  \neq 0$. Let $f(z,u,\mathbf{b})$ be the Lyapunov-Schmidt reduction of $F(\mathbf{x},u,\mathbf{b})$ at $(0,u^*,0)$ and assume $\langle \mathbf{w}, \mathbf{v}^3 \rangle \neq 0$. \\
\textbf{A.} The bifurcation problem $f(z,u,0) = 0$ has a symmetric pitchfork singularity at $(z, u,\mathbf{b}) = (0,u^*,0)$. For values of $u > u^*$ and sufficiently small $|u - u^*|$, two branches of equilibria branch off from $\mathbf{x} = 0$ in a pitchfork bifurcation along a manifold which is tangent at $\mathbf{x} = 0$ to $\operatorname{span}\{ \mathbf{v}\}$. When $\operatorname{sign}\{ \langle \mathbf{w}, \mathbf{v}^3 \rangle/ \langle \mathbf{w}, \mathbf{v} \rangle \} (\alpha + \lambda \gamma)> 0$ ($< 0$) the bifurcation happens supercritically (subcritically) with respect to  $u$. \\
\textbf{B.} The bifurcation problem $f(z, u,\mathbf{b}) = 0$ is an $N_{a}$-parameter unfolding of the symmetric pitchfork, and $\frac{\partial f}{\partial b_{i}}(z,u,\mathbf{b}) = w_{i}$.  
\end{theorem}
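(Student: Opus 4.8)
The plan is to run a one-dimensional Lyapunov--Schmidt reduction at the organizing center $(0,u^*,0)$ and then apply the recognition and unfolding theory for the $\mathds{Z}_2$-symmetric pitchfork from \cite[Chapters III, VII]{Golubitsky1985}. First I would confirm that the kernel is one dimensional. The linearization $L := (d_x F)_{(0,u^*,0)} = (u^*\alpha-d)\mathcal{I}+u^*\gamma A$ has eigenvalues $(u^*\alpha-d)+u^*\gamma\xi$ as $\xi$ ranges over the spectrum of $A$; with $u^*=d/(\alpha+\gamma\lambda)$ well defined by (ii), the eigenvalue attached to $\xi=\lambda$ is exactly $0$, assumption (i) prevents any other $\xi$ from producing a $J_x$-eigenvalue with zero real part, and simplicity of $\lambda$ makes this zero eigenvalue algebraically simple. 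Thus $\ker L=\operatorname{span}\{\mathbf{v}\}$, $\ker L^{T}=\operatorname{span}\{\mathbf{w}\}$, $\operatorname{range}L=\{\mathbf{y}:\langle\mathbf{w},\mathbf{y}\rangle=0\}$, and the reduction yields the scalar equation $f(z,u,\mathbf{b})=\langle\mathbf{w},F(z\mathbf{v}+\mathbf{W}(z,u,\mathbf{b}),u,\mathbf{b})\rangle$, with $\mathbf{W}$ obtained from the range part of $F=0$ by the Implicit Function Theorem.

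Next, since $\mathbf{S}$ is odd, $F(\mathbf{x},u,0)=-d\mathbf{x}+u\mathbf{S}((\alpha\mathcal{I}+\gamma A)\mathbf{x})$ is odd in $\mathbf{x}$, so running the reduction $\mathds{Z}_2$-equivariantly makes $z\mapsto f(z,u,0)$ odd; this forces $f=f_z=f_{zz}=f_u=0$ at the center, leaving only $f_{zzz}\neq0$ and $f_{zu}\neq0$ to verify for the symmetric pitchfork. Using the standard reduced-derivative formulas and $d^2F(\mathbf{v},\mathbf{v})=0$ (again by oddness), I expect $f_{zzz}(0)=\langle\mathbf{w},d^3F(\mathbf{v},\mathbf{v},\mathbf{v})\rangle$; since $(\alpha\mathcal{I}+\gamma A)\mathbf{v}=(\alpha+\gamma\lambda)\mathbf{v}$ and the $i$th component of $d^3F(\mathbf{v},\mathbf{v},\mathbf{v})$ at the origin is $u^*S'''(0)[(\alpha+\gamma\lambda)v_i]^3$, this collapses to $f_{zzz}(0)=u^*S'''(0)(\alpha+\gamma\lambda)^3\langle\mathbf{w},\mathbf{v}^3\rangle$, nonzero exactly by the hypothesis $\langle\mathbf{w},\mathbf{v}^3\rangle\neq0$ (with $S'''(0)<0$ for $\tanh$). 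Similarly $\partial_u d_xF(\mathbf{v})|_0=(\alpha+\gamma\lambda)\mathbf{v}$ gives $f_{zu}(0)=(\alpha+\gamma\lambda)\langle\mathbf{w},\mathbf{v}\rangle\neq0$, using (ii) and $\langle\mathbf{w},\mathbf{v}\rangle\neq0$ (left and right eigenvectors of a simple eigenvalue are never orthogonal). The recognition theorem then certifies the pitchfork, the branches are $\mathbf{x}=z\mathbf{v}+O(z^3)$ hence tangent to $\operatorname{span}\{\mathbf{v}\}$, and solving $\tfrac16 f_{zzz}z^2+f_{zu}(u-u^*)=0$ places them on the side $u>u^*$ precisely when $-f_{zu}/f_{zzz}>0$, which after substituting the two expressions and using $u^*>0$, $d>0$, $S'''(0)<0$ reduces to the stated condition $\operatorname{sign}\{\langle\mathbf{w},\mathbf{v}^3\rangle/\langle\mathbf{w},\mathbf{v}\rangle\}(\alpha+\gamma\lambda)>0$ for supercriticality.

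For Part B, because $\mathbf{b}$ enters $F$ purely additively, the reduced map splits as $f(z,u,\mathbf{b})=\langle\mathbf{w},G(z\mathbf{v}+\mathbf{W},u)\rangle+\langle\mathbf{w},\mathbf{b}\rangle$, where $G$ is the input-free part of $F$. Differentiating in $b_i$, the chain-rule contribution through $\mathbf{W}$ equals $\langle(d_xG)^{T}\mathbf{w},\partial_{b_i}\mathbf{W}\rangle$, which vanishes at the center because $(d_xG)^{T}\mathbf{w}=L^{T}\mathbf{w}=0$; hence $\partial f/\partial b_i=w_i$ there. This identifies $\langle\mathbf{w},\mathbf{b}\rangle$ as the leading imperfection (the constant $p_2$ term) of the pitchfork, so the $N_a$-dimensional family $\mathbf{b}$ is an unfolding whose effectiveness in each coordinate is weighted exactly by the centrality entry $w_i$.

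The main obstacle I anticipate is in the second paragraph: one must run the Lyapunov--Schmidt reduction equivariantly so that the oddness is genuinely inherited by $f$, and then track the reduced third- and mixed-derivative formulas together with the sign bookkeeping (the signs of $u^*$, $S'''(0)$, and the two inner products) carefully enough to land on the exact criticality inequality. By comparison, the reduction in the first paragraph and the unfolding computation in the third are routine once $L^{T}\mathbf{w}=0$ is established.
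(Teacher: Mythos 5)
Your proposal is correct and follows essentially the same route as the paper: a one-dimensional Lyapunov--Schmidt reduction at $(0,u^*,0)$, the coefficient computations $f_{zzz}\propto -2d(\alpha+\lambda\gamma)^2\langle\mathbf{w},\mathbf{v}^3\rangle$, $f_{zu}=(\alpha+\lambda\gamma)\langle\mathbf{w},\mathbf{v}\rangle$, $f_{b_i}=w_i$, and the Golubitsky--Schaeffer recognition/unfolding theorems, with your $\mathds{Z}_2$-equivariance argument playing the same role as the paper's direct observation that $S''(0)=0$ kills the even-order terms. One cosmetic remark: you do not need (and should not assume) $u^*>0$ in the criticality bookkeeping --- substituting $u^*=d/(\alpha+\gamma\lambda)$ into $f_{zzz}$ cancels the sign of $u^*$ and yields the stated condition regardless.
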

\begin{proof}
The eigenvalues of  $J_{x}$ \eqref{eq:jac} are $ \mu = u \alpha - d + u \gamma \lambda $, and so at $u = u^{*}$  $J_{x}$ has a single zero eigenvalue. Observe that the left and right null eigenvectors of $J_x$ are precisely $\mathbf{w}$ and $\mathbf{v}$.  
Following the procedure outlined in \cite[ Chapter I, 3.(e)]{Golubitsky1985} we derive $f(z,u,\mathbf{b})$. In particular we derive the coefficients of the polynomial expansion of $f(z,u,\mathbf{b})$ \cite[Chapter I, Equations 3.23(a)-(e)]{Golubitsky1985} through third order in the state variable. Note that $(d^{2} F)_{0,u^*,0}(\mathbf{v}_{1},\mathbf{v}_{2},\mathbf{v}_{3}) = 0$ for any $\mathbf{v}_{i}$ because $S''(0) = 0$, which implies that $f_{zz} = 0$ by \cite[Chapter I, Equation 3.23(b)]{Golubitsky1985}. Additionally, $f_{z}(0,u^*,0) = 0$ by \cite[Chapter I, Equation 3.23(a)]{Golubitsky1985}. The nonzero coefficients in the expansion read
\begin{gather*}
    f_{xxx} = \langle \mathbf{w}, (d^3 F)_{0,u^*,0}(\mathbf{v},\mathbf{v},\mathbf{v}) \rangle  = -2 d (\alpha + \lambda \gamma)^{2} \langle \mathbf{w}, \mathbf{v}^3 \rangle \\
    f_{b_{i}} = \left\langle \mathbf{w}, \frac{\partial F}{\partial b_{i}}(0,u^*,0) \right\rangle = w_{i} \\
    f_{\hat{u} x} = \left\langle \mathbf{w}, \left( d \frac{\partial F}{\partial \hat{u}}\right)_{0,u^*,0}(\mathbf{v},\mathbf{v})\ \right \rangle = (\alpha + \lambda \gamma) \langle \mathbf{w}, \mathbf{v}\rangle 
\end{gather*}
where $\hat{u} = u - u^{*}$ and $\langle \cdot, \cdot \rangle$ denotes the standard vector inner product. Additionally, observe that we can align the left and right eigenvectors to satisfy $\langle \mathbf{w}, \mathbf{v} \rangle = k_1 > 0$ (the inner product is nonzero by duality). Then $\langle \mathbf{w}, \mathbf{v}^3 \rangle := k_2 = \sum_{i = 1}^{N_a} w_i v_i^3 $. 
The Lyapunov-Scmidt reduction of \eqref{eq:opinion_dynamics} about $(0,u^*,0)$ is thus
\begin{equation}
    \dot{z} = k_1 (\alpha +\lambda  \gamma) \hat{u}z - 2 k_2 d (\alpha + \lambda \gamma)^{2} z^{3} + \langle \mathbf{w}, \mathbf{b}  \rangle + h.o.t. \label{eq:LS_pitchfork}
\end{equation}
Part A of the lemma follows by \eqref{eq:LS_pitchfork}, by the recognition problem for the pitchfork bifurcation \cite[Chapter II, Proposition 9.2]{Golubitsky1985}, as well as by the definition of a center manifold. Part B follows by the definition of an unfolding and by \eqref{eq:LS_pitchfork}.
\end{proof}

As a direct consequence of Theorem \ref{thm:pitchfork} we can describe many of the bifurcations of $\mathbf{x} = 0$ of \eqref{eq:dynamics_vectors} from the spectrum of  
$A$. In particular, if $A$ has $n \leq N_a$ simple eigenvalues $\lambda_i$, we expect $\mathbf{x} = 0$ to exhibit $n$ distinct pitchfork bifurcations at critical values of the parameter $u_i^* = d/(\alpha + \lambda_i \gamma)$. Locally near the bifurcation point the corresponding left eigenvector $\mathbf{v}_i$ informs the sign structure of the emergent equilibria, as explored in \cite{BizyaevaACC2020}. 
For undirected graphs we can deduce the direction in which the bifurcation branches appear.  

\begin{corollary}
Suppose $G$ is an undirected graph. When $u_i^* = d/(\alpha + \lambda_i \gamma) > 0 (<0)$ the pitchfork bifurcation at $u_i^*$ happens supercritically (subcritically). \label{cor:criticality}
\end{corollary}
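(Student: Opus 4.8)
The plan is to specialize the criticality condition established in Theorem~\ref{thm:pitchfork}A to the symmetric setting, where the relevant spectral quantities simplify dramatically. Recall that Part A declares the pitchfork supercritical (subcritical) precisely when $\operatorname{sign}\{\langle \mathbf{w}, \mathbf{v}^3\rangle/\langle \mathbf{w}, \mathbf{v}\rangle\}(\alpha + \lambda_i\gamma) > 0$ ($<0$). I would show that, for an undirected graph, the leading sign factor is always $+1$, so that criticality is governed entirely by the sign of $\alpha + \lambda_i\gamma$, which in turn matches the sign of $u_i^*$.

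First I would use that $G$ undirected means $A$ is symmetric, so the left and right unit eigenvectors associated with the simple real eigenvalue $\lambda_i$ coincide: one may take $\mathbf{w} = \mathbf{v}$. Indeed, $A = A^{\top}$ gives $A\mathbf{w} = \lambda_i \mathbf{w}$, so $\mathbf{w}$ lies in the same one-dimensional eigenspace as $\mathbf{v}$, and the normalization $\langle \mathbf{w}, \mathbf{v}\rangle > 0$ adopted in the proof of Theorem~\ref{thm:pitchfork} forces equality up to sign. Then both inner products in the criticality condition become manifestly positive: $\langle \mathbf{w}, \mathbf{v}\rangle = \langle \mathbf{v}, \mathbf{v}\rangle = \sum_i v_i^2 = 1 > 0$ since $\mathbf{v}$ is a unit vector, and $\langle \mathbf{w}, \mathbf{v}^3\rangle = \langle \mathbf{v}, \mathbf{v}^3\rangle = \sum_i v_i^4 > 0$ since a unit eigenvector cannot vanish identically. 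Hence $\operatorname{sign}\{\langle \mathbf{w}, \mathbf{v}^3\rangle/\langle \mathbf{w}, \mathbf{v}\rangle\} = +1$.

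With the sign factor eliminated, the criticality condition reduces to the sign of $\alpha + \lambda_i\gamma$ alone. Finally I would invoke $d > 0$: since $u_i^* = d/(\alpha + \lambda_i\gamma)$, the quantities $u_i^*$ and $\alpha + \lambda_i\gamma$ share the same sign, so $u_i^* > 0$ iff $\alpha + \lambda_i\gamma > 0$ (supercritical) and $u_i^* < 0$ iff $\alpha + \lambda_i\gamma < 0$ (subcritical), which is exactly the claim. There is no genuine obstacle here — the statement is a corollary in the literal sense, and the only points requiring care are confirming $\mathbf{w} = \mathbf{v}$ in the symmetric case and the strict positivity of $\sum_i v_i^4$, both of which are immediate.
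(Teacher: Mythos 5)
Your proof is correct and follows essentially the same route as the paper's: use symmetry of $A$ to identify the left and right eigenvectors, observe that $\langle \mathbf{v},\mathbf{v}\rangle > 0$ and $\langle \mathbf{v},\mathbf{v}^3\rangle = \sum_k v_k^4 > 0$, reduce the criticality condition of Theorem~\ref{thm:pitchfork} to the sign of $\alpha + \lambda_i\gamma$, and conclude via $d>0$. The extra care you take in justifying $\mathbf{w} = \mathbf{v}$ (simplicity of $\lambda_i$ plus the sign normalization) is a welcome detail the paper glosses over, but it is the same argument.
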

\begin{proof}
Let $\mathbf{v}_{i}$ be the right eigenvector of $A$ corresponding to $\lambda_i$ and observe that for an undirected graph, the left eigenvector of $\lambda_i$ $\mathbf{w}_i = \mathbf{v}_i$ by symmetry. Then $\langle \mathbf{w}_i,\mathbf{v}_i\rangle = \langle \mathbf{v}_i,\mathbf{v}_i\rangle > 0$ and 
$\langle \mathbf{w}_i,\mathbf{v}_i^3\rangle = \langle \mathbf{v}_i,\mathbf{v}_i^3\rangle = \sum_{k = 1}^{N_a} (\mathbf{v}_{i})_{k}^4 > 0$. Therefore the criticality condition from Theorem \ref{thm:pitchfork} simplifies to $(\alpha + \lambda_i \gamma) > 0 (<0)$ for supercritical (subcritical) pitchfork bifurcation. Since $d>0$ the corollary directly follows. 
\end{proof}

Using these general results on the bifurcation behavior of the opinion dynamics, the next theorem establishes that the agreement and disagreement bifurcations  in Proposition \ref{propACC} are  supercritical pitchfork bifurcations in which $\mathbf{x} = 0$ loses stability and new branches of locally stable solutions appear. 

\begin{theorem}[Agreement and Disagreement Pitchforks]\label{thm: dis agree pitch}
Consider \eqref{eq:opinion_dynamics} and let $u_{i} := u \geq 0$. The agreement and disagreement bifurcations in Proposition \ref{propACC} are supercritical pitchfork bifurcations. Additionally, the two steady state solutions which appear for $u > u_a (u_d)$ and $|u - u_a| (|u - u_d|)$ sufficiently small are locally exponentially stable.

\label{thm:agree_disagree_pitchfork}
\end{theorem}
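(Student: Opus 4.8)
The plan is to derive both assertions from the machinery already assembled in Theorem~\ref{thm:pitchfork} and Corollary~\ref{cor:criticality}, reducing the criticality claim to sign checks on $u_a$ and $u_d$ and the stability claim to a center-manifold argument. First I would verify that each bifurcation in Proposition~\ref{propACC} is governed by a simple real eigenvalue, so that Theorem~\ref{thm:pitchfork} is applicable. In the agreement case, $G$ connected and undirected makes $A$ an irreducible nonnegative symmetric matrix, so Perron--Frobenius guarantees that $\lambda_{max}$ is real, simple, and strictly dominant; hypotheses (i) and $\langle \mathbf{w},\mathbf{v}^3\rangle \neq 0$ then hold automatically, the latter because $\mathbf{w}=\mathbf{v}$ gives $\langle \mathbf{v},\mathbf{v}^3\rangle = \sum_k (\mathbf{v})_k^4 > 0$ exactly as in the proof of Corollary~\ref{cor:criticality}. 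The disagreement case proceeds identically once $\lambda_{min}$ is taken to be a simple (hence, for symmetric $A$, strictly separated) eigenvalue.

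Next I would settle criticality through Corollary~\ref{cor:criticality} by showing $u_a > 0$ and $u_d > 0$. For agreement, $\gamma > 0$ and $\lambda_{max} > 0$ (a connected graph with at least one edge has positive spectral radius) give $\alpha + \gamma \lambda_{max} > 0$, hence $u_a > 0$. For disagreement, $\gamma < 0$ together with $\lambda_{min} < 0$ (which follows since $\operatorname{tr} A = \sum_i \lambda_i = 0$ while $A \neq 0$ forces a negative eigenvalue) gives $\gamma \lambda_{min} > 0$, so $\alpha + \gamma \lambda_{min} > 0$ and $u_d > 0$. Corollary~\ref{cor:criticality} then delivers that both bifurcations are supercritical pitchforks, establishing the first claim.

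For the stability claim I would combine the reduced dynamics with a transverse argument. Reading off~\eqref{eq:LS_pitchfork} with $\mathbf{b}=0$, the linear coefficient $k_1(\alpha+\lambda\gamma)$ is positive and the cubic coefficient $-2 k_2 d (\alpha+\lambda\gamma)^2$ is negative (using $k_1,k_2 > 0$ and $d > 0$), so the two nonzero branches exist for $\hat{u} > 0$ and satisfy $\partial \dot{z}/\partial z = -2 k_1(\alpha+\lambda\gamma)\hat{u} < 0$; they are therefore exponentially stable within the one-dimensional center direction. For the remaining directions, the eigenvalues of $J_x$ in~\eqref{eq:jac} are $\mu_i = u(\alpha+\gamma\lambda_i) - d$, and at $u=u^*$ every $\mu_i$ except the single critical one is strictly negative, consistent with the neutral-state stability for $u$ just below $u^*$ in Proposition~\ref{propACC}. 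By continuity of the spectrum and of the bifurcating equilibria, which collapse to $\mathbf{x}=0$ as $u \to u^*$, these $N_a - 1$ transverse eigenvalues of the Jacobian evaluated along the new branches stay in the open left half-plane for $|u-u^*|$ small. Center-manifold reduction then guarantees that the full Jacobian at each emergent equilibrium is Hurwitz, yielding local exponential stability.

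The main obstacle I anticipate lies in the disagreement case: unlike $\lambda_{max}$, the smallest eigenvalue $\lambda_{min}$ of a connected undirected graph need not be simple (for instance, the Petersen graph has $\lambda_{min}=-2$ with multiplicity four), in which case the center manifold is higher-dimensional and the scalar pitchfork normal form no longer captures the reduction. I would therefore either restrict the disagreement statement to graphs with simple, strictly separated $\lambda_{min}$ or fold the non-resonance hypotheses of Theorem~\ref{thm:pitchfork} into the standing assumptions. A secondary technical point is the transfer of stability from the reduced scalar equation to $\mathds{R}^{N_a}$; this rests on the exchange of stability for the supercritical pitchfork on the center manifold combined with the transverse Hurwitz block identified above.
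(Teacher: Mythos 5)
Your proposal is correct and takes essentially the same route as the paper: supercriticality is reduced to Corollary~\ref{cor:criticality} (the paper additionally treats directed graphs with $\gamma>0$ via Perron--Frobenius), and stability follows from persistence of the $N_a-1$ negative transverse eigenvalues together with the sign of the reduced equation's derivative on the branches, where your explicit computation $\partial\dot z/\partial z = -2k_1(\alpha+\lambda\gamma)\hat u < 0$ matches what the paper obtains by citing \cite[Chapter I, Theorem 4.1]{Golubitsky1985}. Your caveat about $\lambda_{min}$ possibly being non-simple (e.g.\ the Petersen graph) is well taken but is a hidden hypothesis of the paper as well---simplicity is assumed implicitly here and invoked explicitly in the proof of Theorem~\ref{thm:saddle}---so it does not distinguish your argument from the paper's.
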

\begin{proof}



The supercriticality of the bifurcating branches of equilibria follows for the undirected case from Corollary \ref{cor:criticality}. For a directed graph and $\gamma > 0$ it follows from the Perron-Frobenius theorem that $\mathbf{v}_{max}$ and $\mathbf{w}_{max}$ have strictly positive components, i.e., $\langle \mathbf{w}_{max}, \mathbf{v}_{max} \rangle > 0$ and $\langle \mathbf{w}_{max}, \mathbf{v}^3_{max} \rangle > 0$. Supercriticality then follows from the conditions in Theorem \eqref{thm:pitchfork}. The two nontrivial fixed points are locally exponentially stable by analytic continuity of eigenvalues:  $N_a - 1$ negative eigenvalues are shared with $\mathbf{x} = 0$ and the bifurcating eigenvalue is negative by \cite[Chapter I, Theorem 4.1]{Golubitsky1985} because $\partial f/ \partial z  > 0$ for the Lyapunov-Schmidt reduction \eqref{eq:LS_pitchfork}.
\end{proof}

One major takeaway of the results presented in this section are rigorous predictions of the effect of inputs on the opinion formation bifurcation behavior. It follows by Theorem~\ref{thm:pitchfork}B and Theorem~\ref{thm: dis agree pitch} that the gain with which an input $b_i$ on node $i$ affects the network opinion formation behavior is exactly the node agreement or disagreement centrality $w_i$. In particular, this results allows us to predict in which direction the agreement or disagreement pitchfork unfold as a function of the input pattern. If $\langle \mathbf{b}, \mathbf{w}_{max} \rangle = 0$ the pitchfork does not unfold. If $\langle \mathbf{b}, \mathbf{w}_{max} \rangle < 0$ ($\langle \mathbf{b}, \mathbf{w}_{max} \rangle > 0$) the pitchfork unfolds in a such a way that it exhibits a lower (upper) smooth branch of equilibria.
For example, in Fig. \ref{fig:bif} the diagram on the left receives a nonzero input which is orthogonal to $\mathbf{w}_{max}$, and the symmetry of the pitchfork bifurcation is unbroken. On the right, $\langle \mathbf{b}, \mathbf{w}_{max} \rangle = 0.1$ and near the singular point of the symmetric diagram, the unfolded diagram favors the positive solution branch which corresponds to agents agreeing on the positive option.

\begin{figure} 
    \centering
    \includegraphics[width=0.9\linewidth]{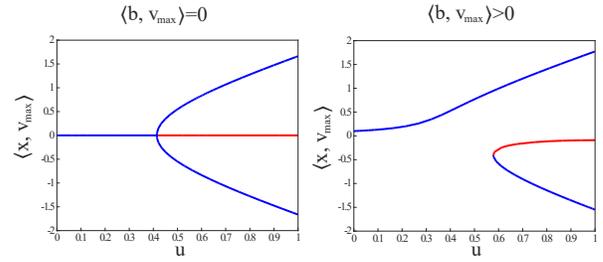}
    \caption{Symmetric pitchfork bifurcation and its unfolding for opinion dynamics \eqref{eq:opinion_dynamics} in the agreement regime  with three agents communicating over an undirected line graph. Blue (red) curves correspond to stable (unstable) equilibria. Vertical axis is the projection of the system equilibria onto $W({\lambda_{max}})$.  Parameters: $d=\alpha = \gamma = 1$. Left: $\mathbf{b} = (0.05,0,-0.05)$; right: $\mathbf{b} = 0.1 \mathbf{v}_{max} + (0.05,0,-0.05)$. Bifurcation diagrams  generated with help of MatCont 
    \cite{matcont}.}\label{fig:bif}
\end{figure}

\section{Dynamic Attention: Cascades and Tunable Sensitivity to Input}
\label{sec:dynamicu}

In this section we illustrate how distributed state feedback dynamics in the attention parameters of the opinion dynamics \eqref{eq:opinion_dynamics} gives rise to agreement and disagreement cascades with tunable sensitivity to distributed input. We show that the magnitude of the distributed input vector, and its orientation relative to the centrality eigenvector $\mathbf{v}_{max}$ ($\mathbf{v}_{min}$) when $\gamma > 0$ ($< 0$) can both be used as control parameters to trigger cascades on the network. We prove that a single design parameter in the attention feedback dynamics can be used to tune the threshold above which inputs trigger a cascade. 

As in \cite{Bizyaeva2020} we define state feedback dynamics for the attention parameter $u_{i}$ of each agent $i$ to track the saturated norm of the system opinion state observed by agent $i$:
\begin{equation}
    \tau_u \dot{u}_{i} = - u_{i} + S_{u} \left(x_{i}^{2} + \sum_{k = 1}^{N_a} (a_{ik} x_k)^2 \right ) \label{eq:u_dynamics}.
\end{equation}
$S_u$ takes the form of the Hill activation function:
\begin{equation}
    S_{u}(y) = \munderbar u + (\bar u - \munderbar u) \frac{y^{n}}{(y_{th})^{n} + y^{n}} \label{eq:Su}, 
\end{equation}
where threshold $y_{th} > 0$.  In \eqref{eq:Su} we constrain $\bar u$ and $\munderbar u$ such that  $\bar u > u_c \geq \munderbar u > 0$, 
with $u_c = u_a$ ($u_d$) when $\gamma>0$ ($<0$). As in \cite{Franci2021}, we define an \textit{opinion cascade} as a network transition from a weakly opinionated ($|x_i|\ll\vartheta$, for all $i$), 
weakly attentive ($|u_i|\ll\vartheta$, for all $i=1$) system state to a 
strongly opinionated ($|x_i|\gg\vartheta$, for all $i$) and strongly attentive ($|u_i|\gg\vartheta$, for all $i$) system state. See Fig. \ref{fig:cascade_traj} for an example of an opinion cascade 
in an agreement ($\gamma >0$) and disagreement ($\gamma < 0$) regime. In this section we restrict our attention to undirected graph, which largely simplify the relevant algebraic computations. 
\begin{assumption}
$G$ is undirected.
\end{assumption}

\begin{figure}
    \centering
    \includegraphics[width=0.9\linewidth]{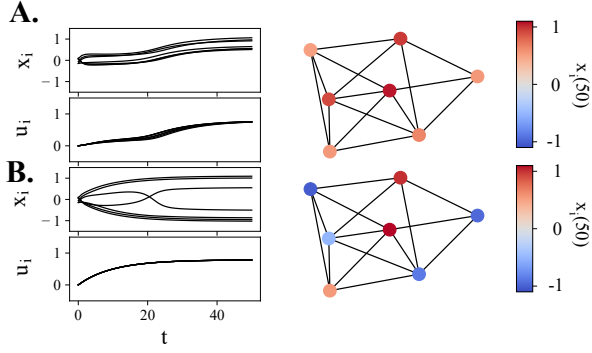}
    \caption{A. Agreement cascade, $\gamma = 1$, $ \underline u = u_a - 0.01$, $\bar u = u_a + 0.6$; B. Disagreement cascade, $\gamma = -1$, $\underline u = u_d - 0.01$, $\bar u = u_d + 0.6$. Left) Opinion and attention trajectories over time $t$. Right) Graph structure. Color of each node $i$ is opinion $x_i(t)$ at $t=50$. Parameters: $d = 1$, $n = 3$, $u_{th} = 0.4$, $\tau_u = 10$, $\alpha = 1$, $d = 1$. Each $x_i(0)$  drawn from $\mathcal{N}(0,0.1)$ and each $u_i(0) = 0$. Input vector was randomly generated: each $b_i$ drawn from $\mathcal{N}(0,0.2)$.  }
    \label{fig:cascade_traj}
\end{figure}

In vector form, coupled dynamics \eqref{eq:opinion_dynamics},\eqref{eq:u_dynamics} read
\begin{equation}
    \begin{pmatrix} \dot{\mathbf{x}} \\ \dot{\mathbf{u}} \end{pmatrix}= - \begin{pmatrix}d \mathbf{x} \\ \mathbf{u} \end{pmatrix} +  \begin{pmatrix}\mathbf{u} \odot \mathbf{S}\left( (\alpha \mathcal{I} + \gamma A) \mathbf{x}\right) + \mathbf{b}\\
      \mathbf{S}_{u}( (\mathcal{I}+A) \mathbf{x}^{2} ) \end{pmatrix}
     \label{eq:coupled_dynamics_vectors}
\end{equation}
where  $\mathbf{S}_u(\mathbf{y}) = (S_u(y_{1}), \dots, S_u(y_{n}))$, $\mathbf{y} \in \mathds{R}^{n}$, $\mathbf{x}^{2} = (x_1^{2}, \dots, x_{N_{a}}^{2}) $, and $\odot$ is the element-wise product of vectors. The Jacobian of~\eqref{eq:coupled_dynamics_vectors}  at  equilibrium point $(\mathbf{x}_{s},\mathbf{u}_{s})$ is 
\begin{equation}
    J_{(\mathbf{x},\mathbf{u})} = \begin{pmatrix} -d \mathcal{I} +  \left(\operatorname{diag} \{\mathbf{u}_s\} (\alpha \mathcal{I} + \gamma A) \right)\odot K_{1}  & K_{2}  \\ 
     (\mathcal{I} + A) \operatorname{diag}\{\mathbf{x}_s\} \odot K_{3} & - \mathcal{I} \end{pmatrix} \label{eq:jac_coupled}
\end{equation}
where 
$K_{1} = \mathbf{S}'\left( (\alpha \mathcal{I} + \gamma A) \mathbf{x}_{s}\right) \mathbf{1}^{T}$, $K_{2} = \operatorname{diag} \{ \mathbf{S}\left( (\alpha \mathcal{I} + \gamma A) \mathbf{x}_{s}\right) \}$,  $K_{3} = 2\mathbf{S}_u'\left( (\mathcal{I} + A) \mathbf{x}^{2}_{s}\right) \mathbf{1}^{T}$, and $\mathbf{1} = (1, \dots, 1) \in \mathds{R}^{N_a}$. Let $G(\mathbf{y},\mathbf{b})$ be the right hand side of \eqref{eq:coupled_dynamics_vectors} 
with $\mathbf{y} = (\mathbf{x},\mathbf{u})$. 

\begin{lemma}[Stability of $\mathbf{x} = 0$]
Consider \eqref{eq:coupled_dynamics_vectors} with $\mathbf{b} = 0$. The point $(\mathbf{x}_s,\mathbf{u}_s) = (\mathbf{0},\munderbar u\mathbf{1})$ is an equilibrium point of the coupled dynamics. When either $\gamma > 0$ and $\munderbar u < u_{a}$ or  $\gamma < 0$ and $\munderbar u < u_{d}$, it is locally exponentially stable. \label{lem:stability_origin}
\end{lemma}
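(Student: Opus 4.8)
The plan is to first confirm that $(\mathbf{0},\munderbar u\mathbf{1})$ is a fixed point and then linearize, exploiting the structure of \eqref{eq:jac_coupled} at $\mathbf{x}_s=\mathbf{0}$. To verify the equilibrium I would substitute $\mathbf{x}=\mathbf{0}$ into \eqref{eq:coupled_dynamics_vectors} with $\mathbf{b}=\mathbf{0}$: since $S(0)=0$ the opinion update gives $\munderbar u\mathbf{1}\odot\mathbf{S}(\mathbf{0})=\mathbf{0}$, and since $\mathbf{x}^2=\mathbf{0}$ and $S_u(0)=\munderbar u$ by \eqref{eq:Su}, the attention update gives $-\munderbar u\mathbf{1}+\mathbf{S}_u(\mathbf{0})=-\munderbar u\mathbf{1}+\munderbar u\mathbf{1}=\mathbf{0}$.

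Next I would evaluate the Jacobian \eqref{eq:jac_coupled} at this equilibrium and argue it decouples into block form. At $\mathbf{x}_s=\mathbf{0}$ the argument of every nonlinearity vanishes, so $\mathbf{S}'(\mathbf{0})=\mathbf{1}$ gives $K_1=\mathbf{1}\mathbf{1}^T$, the all-ones matrix, which acts as the identity under $\odot$, while $\mathbf{S}(\mathbf{0})=\mathbf{0}$ forces $K_2=0$. With $\mathbf{u}_s=\munderbar u\mathbf{1}$, so that $\operatorname{diag}\{\mathbf{u}_s\}=\munderbar u\mathcal{I}$, the top-left block collapses to $-d\mathcal{I}+\munderbar u(\alpha\mathcal{I}+\gamma A)$, which is exactly $J_x$ in \eqref{eq:jac} evaluated at $u=\munderbar u$. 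The upper-right block is $K_2=0$, and the lower-left block vanishes because it carries the factor $\operatorname{diag}\{\mathbf{x}_s\}=0$; the lower-right block is $-\mathcal{I}$. Hence the linearization is block diagonal with blocks $J_x|_{u=\munderbar u}$ and $-\mathcal{I}$.

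The spectrum of the linearization is then the union of the spectra of the two blocks. The block $-\mathcal{I}$ contributes the eigenvalue $-1$ with multiplicity $N_a$. Under the standing undirected assumption $A$ is symmetric, so the $\lambda_i$ are real and, as noted in the proof of Theorem \ref{thm:pitchfork}, the eigenvalues of $J_x|_{u=\munderbar u}$ are $\mu_i=\munderbar u(\alpha+\gamma\lambda_i)-d$. For $\gamma>0$ the map $\lambda\mapsto\munderbar u\gamma\lambda$ is increasing (since $\munderbar u>0$), so $\max_i\mu_i$ is attained at $\lambda_{max}$, and $\munderbar u<u_a=d/(\alpha+\gamma\lambda_{max})$ from \eqref{eq:u_agree} yields $\mu_{max}=\munderbar u(\alpha+\gamma\lambda_{max})-d<0$; for $\gamma<0$ the map is decreasing, the maximum sits at $\lambda_{min}$, and $\munderbar u<u_d$ from \eqref{eq:u_disagree} gives $\mu_{max}<0$ in the same way. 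In both cases every $\mu_i<0$, so the full linearization is Hurwitz and local exponential stability follows from linearization.

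The computation is essentially routine; the only point requiring care is evaluating the Hadamard products and the $K_i$ at $\mathbf{x}_s=\mathbf{0}$, which is what produces the clean block-diagonal structure and reduces the claim to the single-block eigenvalue bound already implicit in Proposition \ref{propACC}. No separate argument for the sign of $\lambda_{max}$ or $\lambda_{min}$ is needed, since the hypotheses are phrased directly through the thresholds $u_a,u_d$, so the bound on $\munderbar u$ transfers immediately to the sign of $\mu_{max}$.
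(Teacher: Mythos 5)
Your proposal is correct and follows essentially the same route as the paper's own proof: verify the equilibrium by direct substitution, observe that the Jacobian \eqref{eq:jac_coupled} at $(\mathbf{0},\munderbar u\mathbf{1})$ reduces to the block-diagonal matrix with blocks $-d\mathcal{I}+\munderbar u(\alpha\mathcal{I}+\gamma A)$ and $-\mathcal{I}$, and conclude that all $2N_a$ eigenvalues have negative real part when $0<\munderbar u<u_a$ (resp.\ $u_d$). The paper states these steps tersely; you simply fill in the details (the evaluation of $K_1$, $K_2$, and the lower-left block, and the explicit eigenvalue bound $\mu_i=\munderbar u(\alpha+\gamma\lambda_i)-d<0$), all of which check out.
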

\begin{proof}
Plugging the state values into the coupled dynamics \eqref{eq:coupled_dynamics_vectors} easily verifies that $(\dot{\mathbf{x}},\dot{\mathbf{u}}) = 0$ at $(\mathbf{x}_s,\mathbf{u}_s)$. Evaluated at this point, \eqref{eq:jac_coupled} simplifies to the block diagonal matrix
\begin{equation}
    J_{(0,\munderbar u)} = \begin{pmatrix} -d \mathcal{I} + \munderbar u  (\alpha \mathcal{I} + \gamma A)   & 0  \\ 
     0 & - \mathcal{I} \end{pmatrix}. \label{eq:jac_diag}
\end{equation}
When $0 < \munderbar u < u_a$ ($u_d$), \eqref{eq:jac_diag} has $2 N_{a}$ eigenvalues with negative real part, and  the stability conclusion follows.
\end{proof}

\begin{lemma}[Small Input Approximates Equilibrium Opinion]\label{lem:small input approx}
Let $(\mathbf{x}_{s}, \mathbf{u}_{s})$ be an equilibrium of \eqref{eq:coupled_dynamics_vectors} with inputs $\mathbf{b}$. Let $\munderbar u < u_c$ where $u_c = u_a$ if $\gamma > 0$ or $u_{c} = u_d$ if $\gamma < 0$. Define $\mathbf{v} =\mathbf{v}_{max}$ if $\gamma > 0$ or  $\mathbf{v} =\mathbf{v}_{min}$ if $\gamma < 0$.   Then \label{lem:xeq}
\begin{equation}
    \left.\frac{\partial \| \mathbf{x}_s \|}{\partial | \langle \mathbf{v}, \mathbf{b} \rangle |}\right|_{\mathbf{b}=\mathbf{x}_s=0} > 0. 
\end{equation}
\end{lemma}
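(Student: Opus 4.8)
The plan is to parametrize the equilibrium branch by the input $\mathbf{b}$ using the Implicit Function Theorem (IFT), compute the first-order response of $\mathbf{x}_s$ to $\mathbf{b}$ at the zero-input equilibrium, and then extract the gain along $\mathbf{v}$ from the spectral decomposition of the opinion block of the Jacobian. First I would invoke Lemma~\ref{lem:stability_origin}: under the hypothesis $\munderbar u < u_c$, the point $(\mathbf{0},\munderbar u\mathbf{1})$ is an equilibrium of \eqref{eq:coupled_dynamics_vectors} at $\mathbf{b}=0$, and its Jacobian \eqref{eq:jac_diag} is block diagonal with an invertible (Hurwitz) opinion block $M := -d\mathcal{I} + \munderbar u(\alpha\mathcal{I} + \gamma A)$ and attention block $-\mathcal{I}$. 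Since $G(\mathbf{y},\mathbf{b})$ is smooth and $\partial G/\partial\mathbf{y}$ is invertible here, the IFT produces a unique smooth branch $\mathbf{b}\mapsto(\mathbf{x}_s(\mathbf{b}),\mathbf{u}_s(\mathbf{b}))$ of equilibria through $(\mathbf{0},\munderbar u\mathbf{1})$ for small $\|\mathbf{b}\|$, which is what lets us treat $\mathbf{x}_s$ as a differentiable function of $\mathbf{b}$ near the origin.

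Next I would differentiate the equilibrium condition. Because $\mathbf{b}$ enters \eqref{eq:coupled_dynamics_vectors} only additively in the $\dot{\mathbf{x}}$ equation, $\partial G/\partial\mathbf{b} = (\mathcal{I};\,\mathbf{0})$, and the block-diagonal structure of the Jacobian decouples the opinion and attention responses, yielding $\left.\partial\mathbf{x}_s/\partial\mathbf{b}\right|_{\mathbf{b}=0} = -M^{-1}$ and $\left.\partial\mathbf{u}_s/\partial\mathbf{b}\right|_{\mathbf{b}=0} = 0$. Hence $\mathbf{x}_s = -M^{-1}\mathbf{b} + O(\|\mathbf{b}\|^2)$. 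Under the standing undirected assumption $A = A^{T}$, so $M$ is symmetric with the orthonormal eigenbasis of $A$, and its eigenvalue along the eigenvector $\mathbf{v}_i$ of $A$ is $\mu_i = \munderbar u(\alpha + \gamma\lambda_i) - d$. By Lemma~\ref{lem:stability_origin} all $\mu_i < 0$, so $M$ is negative definite. The distinguished vector $\mathbf{v}$ is the eigenvector whose eigenvalue $\mu_*$ is closest to $0$: for $\gamma>0$ this maximizes $\gamma\lambda_i$ and gives $\mathbf{v}_{max}$, while for $\gamma<0$ it gives $\mathbf{v}_{min}$. Consequently $-M^{-1}$ is positive definite with dominant eigenpair $(1/|\mu_*|,\mathbf{v})$.

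Finally, I would project the first-order expansion onto $\mathbf{v}$: using $M^{-1}\mathbf{v} = \mu_*^{-1}\mathbf{v}$ and $\mu_*<0$ gives $\langle\mathbf{v},\mathbf{x}_s\rangle = |\mu_*|^{-1}\langle\mathbf{v},\mathbf{b}\rangle + O(\|\mathbf{b}\|^2)$. Since $\mathbf{v}$ is a unit vector, $\|\mathbf{x}_s\|\ge|\langle\mathbf{v},\mathbf{x}_s\rangle|$, with equality to first order when $\mathbf{b}$ is aligned with $\mathbf{v}$; taking $\mathbf{b}=\beta\mathbf{v}$ gives $\|\mathbf{x}_s\| = |\beta|/|\mu_*| + O(\beta^2)$ with $|\langle\mathbf{v},\mathbf{b}\rangle| = |\beta|$, so the derivative evaluates to $1/|\mu_*| > 0$.

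The main obstacle is interpretive rather than computational: both $\|\cdot\|$ and $|\langle\mathbf{v},\cdot\rangle|$ are non-smooth at the origin, so the quantity must be read as a one-sided directional derivative. The role of the spectral decomposition is precisely to disentangle the $\mathbf{v}$-component of the opinion response from its orthogonal components, showing that the gain in the $\mathbf{v}$-direction is the largest among all directions and equals $1/|\mu_*|$, strictly positive thanks to the stability bound $\munderbar u < u_c$ that forces $\mu_* < 0$.
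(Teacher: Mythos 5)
Your proposal is correct and follows essentially the same route as the paper's proof: linearize the equilibrium condition at $(\mathbf{0},\munderbar u\mathbf{1})$, invert the block-diagonal Jacobian to get $\mathbf{x}_s = -J_x^{-1}\mathbf{b}$ to first order, and expand in the orthonormal eigenbasis of the symmetric $A$ to isolate the gain $1/|\mu_*|$ along $\mathbf{v}$. If anything, your version is tighter than the paper's: you supply the Implicit Function Theorem justification for the smooth equilibrium branch, the one-sided directional-derivative reading of the non-smooth quantity, and the correct eigenvalue formula $\mu_i = \munderbar u(\alpha+\gamma\lambda_i)-d$, all of which the paper leaves implicit or states with minor typos.
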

\begin{proof}
Since $\mathbf{x} = 0$ is an equilibrium of the system with $\mathbf{b} = 0$, $(\mathbf{x}_{s}, \mathbf{u}_{s})$ can be approximated by the linearization
\begin{equation}
    J_{(0,\munderbar u)}\begin{pmatrix} \mathbf{x}_{s} \\ \mathbf{u}_{s} \end{pmatrix} + \begin{pmatrix} \mathbf{b} \\ \mathbf{0}
    \end{pmatrix} = \mathbf{0}.
\end{equation}
$J_{(0,\munderbar u)}$ is symmetric and invertible so its inverse has the same eigenvectors. 
Then, to the first order, it holds that
\begin{equation}
    \mathbf{x}_{s} = - J_x^{-1} \mathbf{b} = - \sum_{i = 1}^{N_a} \frac{1}{\mu_{i}}  \langle \mathbf{v}_i, \mathbf{b} \rangle \mathbf{v}_{i} 
\end{equation}
where $J_x$ is \eqref{eq:jac} with $u = \munderbar u$, $\mathbf{v}_{i}$ is an eigenvector of $A$ corresponding to eigenvalue $\lambda_i$, and $\mu_i = d + \munderbar u(\alpha + \lambda_i \gamma)$. The eigenvectors are orthogonal, so $\| \mathbf{x}_s \| = \sum_{i = 1}^{N_a}\frac{1}{\lambda_{i}^2}\langle \mathbf{v}_i, \mathbf{b} \rangle^2$ and the lemma follows.
\end{proof}

\begin{theorem}[Saddle-Node Bifurcation]
Consider \eqref{eq:coupled_dynamics_vectors} with a nonzero input vector $\mathbf{b}$ and define $u_{c} = u_a$ if $\gamma > 0$ and $u_c = u_d$ if $\gamma < 0$. Let $\mathbf{v}_c = \mathbf{v}_{max}$ or $\mathbf{v}_{min}$ respectively. Suppose $u_{th} \ll 1$ and $\munderbar u < u_{c}$ with $|\munderbar u - u_c|$ sufficiently small. There exists $p>0$ such that when $| \langle \mathbf{v}_c, \mathbf{b} \rangle | = p$ there exists an equilibrium $(\mathbf{x}_p, \mathbf{u}_p)$ of  ~\eqref{eq:coupled_dynamics_vectors} such that, if
\begin{equation}
    \langle \mathbf{v}_c, \mathbf{u}_p \odot \mathbf{v}_c \odot \mathbf{v}_i \odot \mathbf{S}''((\alpha \mathcal{I} + \gamma A) \mathbf{x}_p) \rangle < 0 \label{eq:IP_condition}
\end{equation}
is verified at $(\mathbf{x}_p, \mathbf{u}_p)$ with $\lambda_i \neq \lambda_c$ an eigenvalue of $A$ with corresponding eigenvector  $\mathbf{v}_i$ :
(i) There exists a smooth curve of equilibria in $\mathds{R}^{2 N_{a}} \times \mathds{R}$ passing through $(\mathbf{x}_p, \mathbf{u}_p,p)$, tangent to the hyperplane $\mathds{R}^{2 N_a} \times \{p\}$; (ii). There are no equilibria near $(\mathbf{x}_p, \mathbf{u}_p,p)$ when $ |\langle \mathbf{v}_c, \mathbf{b} \rangle | > p$ and two equilibria near $(\mathbf{x}_p, \mathbf{u}_p,p)$ for each $ |\langle \mathbf{v}_c, \mathbf{b} \rangle | < p$; (iii). The two equilibria near  $(\mathbf{x}_p, \mathbf{u}_p,p)$ are hyperbolic and have stable manifolds of dimensions $N_a$ and $N_a - 1$ respectively. \label{thm:saddle}
\end{theorem}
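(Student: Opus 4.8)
The plan is to reduce the $2N_a$-dimensional equilibrium problem $G(\mathbf{y},\mathbf{b})=0$ to a scalar bifurcation equation along the critical direction $\mathbf{v}_c$ and to recognize a fold (limit point) in a single scalar parameter. I would parametrize the input as $\mathbf{b}=s\,\mathbf{v}_c+\mathbf{b}^{\perp}$ with $\mathbf{b}^{\perp}\perp\mathbf{v}_c$ held fixed, and treat $s=\langle\mathbf{v}_c,\mathbf{b}\rangle$ as the bifurcation parameter (by the state symmetry the case $s<0$ is identical, which is why the statement is phrased on $|\langle\mathbf{v}_c,\mathbf{b}\rangle|$). Since the lower-right block of the coupled Jacobian \eqref{eq:jac_coupled} is $-\mathcal{I}$, hence invertible, I would first use the Implicit Function Theorem to slave the attention coordinates, solving $\mathbf{u}_s=\mathbf{S}_u((\mathcal{I}+A)\mathbf{x}_s^{2})$ at equilibrium and collapsing the equilibrium problem to the $N_a$ opinion equations with state-dependent attention. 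Lyapunov--Schmidt reduction along $\mathbf{v}_c$ (a legitimate projection because $G$ is undirected, so $\mathbf{w}_c=\mathbf{v}_c$) then yields a scalar equation $g(z,s)=0$ whose constant term is, to leading order, $s$ (by Theorem~\ref{thm:pitchfork}B) and whose linear-in-$z$ coefficient is proportional to $(\munderbar u-u_c)<0$, both modified by the attention feedback.

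To establish existence of $p$, I would start from the equilibrium $(\mathbf{0},\munderbar u\mathbf{1})$, which Lemma~\ref{lem:stability_origin} guarantees is hyperbolic and stable for $\munderbar u<u_c$, and continue it in $s$ by the IFT. Lemma~\ref{lem:small input approx} shows that $\|\mathbf{x}_s\|$ grows with $|s|$; because $u_{th}\ll1$ makes the Hill function $S_u$ switch on sharply, even a small opinion drives the slaved attention toward $\bar u>u_c$, so the effective gain along $\mathbf{v}_c$ is pushed through threshold. Consequently the $\mathbf{v}_c$-associated eigenvalue of the coupled Jacobian, strictly negative at $s=0$, increases through zero at some finite $|s|=p$ by the intermediate value theorem; the remaining eigenvalues stay bounded away from the imaginary axis (the $-\mathcal{I}$ block contributes eigenvalues near $-1$, and the non-critical eigenvalues of the opinion block remain strictly negative because $|\munderbar u-u_c|$ is small), so the zero eigenvalue at the degenerate equilibrium $(\mathbf{x}_p,\mathbf{u}_p,p)$ is simple. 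Solving the lower block of the null-vector equation expresses the right null vector's attention part in terms of its opinion part, and the upper block shows the opinion part lies to leading order along $\mathbf{v}_c$; transversality in the parameter then holds because $\partial G/\partial s=(\mathbf{v}_c,\mathbf{0})$ projects onto the left null vector with a nonzero coefficient.

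With these hypotheses in hand I would invoke the standard saddle-node (limit-point) bifurcation theorem, cf.\ the recognition problem in \cite{Golubitsky1985}. Its nondegeneracy requirement is that the quadratic coefficient $\langle \mathbf{r},(d^2G)_{(\mathbf{x}_p,\mathbf{u}_p)}(\mathbf{q},\mathbf{q})\rangle$ be nonzero; contracting $d^2G$ through the block structure, the only surviving contribution comes from the opinion nonlinearity $\mathbf{u}\odot\mathbf{S}((\alpha\mathcal{I}+\gamma A)\mathbf{x})$ and evaluates to the inner product in \eqref{eq:IP_condition}. Crucially this term is nonzero precisely because the fold sits at $\mathbf{x}_p\neq0$, so that $\mathbf{S}''((\alpha\mathcal{I}+\gamma A)\mathbf{x}_p)\neq0$ --- in sharp contrast to the pitchfork of Theorem~\ref{thm:pitchfork}, where $\mathbf{x}=0$ forces $S''(0)=0$ and the quadratic term vanishes. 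The sign condition \eqref{eq:IP_condition}${}<0$ fixes the orientation of the fold so that the two equilibria lie on the side $|s|<p$ and disappear for $|s|>p$, giving (i) the smooth equilibrium curve tangent to $\mathds{R}^{2N_a}\times\{p\}$ and (ii) the stated equilibrium count. Finally (iii) follows from the spectral bookkeeping above: across the fold a single real eigenvalue changes sign while all others remain sign-definite, so the two colliding branches are hyperbolic with stable-manifold dimensions differing by one, as claimed.

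The hardest part, I expect, is the existence argument for $p$: one must show rigorously that the continued low branch actually sends its $\mathbf{v}_c$-eigenvalue into the right half-plane, rather than responding monotonically to input as in the constant-attention case of Section~\ref{sec:constantu}, and it is exactly the feedback modulation of attention together with the sharp-threshold assumption $u_{th}\ll1$ that forces this crossing. Quantifying how small $|\munderbar u-u_c|$ and $u_{th}$ must be to guarantee a simple crossing, while controlling the slaved attention map and its effect on the reduced coefficients, is the main technical obstacle; the reduction of $\langle\mathbf{r},(d^2G)(\mathbf{q},\mathbf{q})\rangle$ to \eqref{eq:IP_condition} is then comparatively routine bookkeeping.
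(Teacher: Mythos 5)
Your proposal is essentially correct and its core mechanics coincide with the paper's: a Lyapunov--Schmidt reduction along the critical direction, a quadratic coefficient that is nonzero precisely because the fold sits at $\mathbf{x}_p \neq 0$ (so $\mathbf{S}''\neq 0$, in contrast to the pitchfork at the origin) and that reduces to the inner product in \eqref{eq:IP_condition}, transversality through the constant term $\langle \mathbf{v}_c,\mathbf{b}\rangle$, an appeal to the standard saddle-node theorem for (i)--(ii), and spectral bookkeeping (one simple eigenvalue crossing, all others bounded away from the imaginary axis, the attention block near $-1$) for (iii). You differ from the paper in two places. First, you slave the attention variables via the Implicit Function Theorem before reducing, collapsing to $N_a$ opinion equations; the paper instead performs the reduction on the full $2N_a$-dimensional system, approximating the null eigenvector by $(\mathbf{v}_c,\mathbf{0})$ --- an approximation it explicitly labels a conjecture, deferring the matrix perturbation theory to future work. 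Second, your existence argument for $p$ is an intermediate-value-theorem heuristic driven by the sharp Hill switch ($u_{th}\ll 1$ pushing attention toward $\bar u > u_c$), whereas the paper argues the crossing via the cross-terms of the reduction: it computes $g_{z\beta_i} = -(\alpha+\lambda_c\gamma)K_i$ with $K_i$ the quantity in \eqref{eq:IP_condition}, so that under the hypothesis the critical eigenvalue increases monotonically with $|\langle\mathbf{v}_c,\mathbf{b}\rangle|$ and must cross zero. Your version makes the role of $u_{th}$ more transparent; the paper's version explains why the sign condition \eqref{eq:IP_condition} appears in the theorem statement at all, which your route does not recover. Both arguments share the same soft spot you correctly flag --- neither rigorously controls the perturbed spectrum along the continued branch.

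One concrete caution about your slaving step: substituting $\mathbf{u} = \mathbf{S}_u((\mathcal{I}+A)\mathbf{x}^2)$ into the opinion equations makes the reduced vector field's second derivative pick up chain-rule terms proportional to $S_u'$ multiplying $\mathbf{S}((\alpha\mathcal{I}+\gamma A)\mathbf{x}_p)$, which do not vanish at $\mathbf{x}_p\neq 0$. Your claim that ``the only surviving contribution'' to $\langle\mathbf{r},(d^2G)(\mathbf{q},\mathbf{q})\rangle$ is \eqref{eq:IP_condition} therefore needs an estimate showing these $S_u'$-terms are subordinate --- and this is not automatic, since $u_{th}\ll 1$ is exactly the regime in which the Hill function is steep and $S_u'$ can be large near threshold. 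The paper avoids writing these terms only because its leading-order null vector has zero attention component, which suppresses them at the order it works to; in your exact slaved formulation they are present and must be either absorbed into a corrected version of \eqref{eq:IP_condition} or bounded. This is a genuine (if local) gap in your nondegeneracy computation, though it is of the same character as the approximation the paper itself concedes.
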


\begin{proof}

Observe that \eqref{eq:jac_coupled} depends continuously on the model parameters and on the state. Then, by \cite[Chapter II, Theorem 5.1]{kato2013perturbation} the eigenvalues and eigenvectors of \eqref{eq:jac_coupled} changes continuously for $\| \mathbf{x}_{s}\|$ sufficiently small. Due to space constraints, we leave the full development of the matrix perturbation theory for future work and instead conjecture that if $\| \mathbf{x}_s\|$ is sufficiently small then the eigenvectors of \eqref{eq:jac_diag} are a good approximation of the eigenvectors of \eqref{eq:jac_coupled}. Since the origin of \eqref{eq:coupled_dynamics_vectors} with $\mathbf{b} = 0$ is stable by Lemma \ref{lem:stability_origin} and because $\lambda_{min}$ and $\lambda_{max}$ are simple eigenvalues, if an eigenvalue of $J_{(\mathbf{x}_s,\mathbf{u}_s)}$ crosses zero for some $\| \mathbf{b} \|$ it must also be simple. Furthermore this eigenvalue corresponds to a perturbation of $-d + \munderbar u(\alpha + \gamma \lambda_{c})$ where $\lambda_c = \lambda_{max}$ or $\lambda_{min}$ respectively.

By Lemma~\ref{lem:small input approx}, if $\mathbf{b}\neq 0$ then at equilibrium $\|\mathbf{x}\| \neq 0$.
Let $g(z,\mathbf{b})$ be the Lyapunov-Schmidt reduction of \eqref{eq:coupled_dynamics_vectors} at an equilibrium $(\mathbf{x}, \mathbf{u})$ for sufficiently small inputs. We have
\begin{multline}
    d^{2}G_{\mathbf{y}_{p},\mathbf{b}_{p}}(\Tilde{\mathbf{v}}_c,\Tilde{\mathbf{v}}_c) = \sum_{j = 1 }^{N_a} \sum_{k = 1}^{N_a}  \left.\frac{\partial^2 (G)_i}{\partial x_j \partial x_k} (\mathbf{v}_c)_j (\mathbf{v}_c)_k\right|_{(\mathbf{y}_{p},\mathbf{b}_{p})}\\
    = (\alpha + \lambda_c \gamma)^2 \begin{pmatrix}\mathbf{u}_{p} \\ \mathbf{0} \end{pmatrix} \odot \begin{pmatrix} \mathbf{S}''((\alpha \mathcal{I} + \gamma A) \mathbf{x}_{p}) \\ \mathbf{0} \end{pmatrix}. 
\end{multline}
and the second derivative in the Lyapunov-Schmidt reduction evaluates to
\begin{multline*}
    g_{zz} = \langle \Tilde{\mathbf{v}}_c,  d^{2}G_{\mathbf{y}_{p},\mathbf{b}_{p}}(\Tilde{\mathbf{v}}_c,\Tilde{\mathbf{v}}_c) \rangle\\
    = (\alpha + \lambda_c \gamma)^2  \sum_{i = 1}^{N_a} (\mathbf{u}_p)_i (\mathbf{v}_{c})_i S''\left(\alpha (\mathbf{x}_{p})_{i} + \gamma \sum_{\substack{k=1 \\ k \neq i}}^\Na a_{ik} (\mathbf{x}_{p})_{k}\right) > 0
\end{multline*}
where we used Lemma~\ref{lem:small input approx} to approximate $\mathbf{x}_p \approx k \mathbf{v}_c$ for some $k \in \mathds{R}$ and we can choose $\mathbf{v}_c$ for which $k > 0$ and such that
\begin{multline*}
     \operatorname{sign} \{ (\mathbf{v}_{c})_{i} \} =  \operatorname{sign} \left\{S''\left(\alpha (\mathbf{x}_{p})_{i} + \gamma \sum_{\substack{k=1 \\ k \neq i}}^\Na a_{ik} (\mathbf{x}_{p})_{k}\right) \right\} \\
     = \operatorname{sign} S''\left( k(\alpha + \lambda_c \gamma) (\mathbf{v}_{c})_{i} \right)
\end{multline*}
Additionally, 
\begin{equation*}
    g_{b_i} = \left\langle \Tilde{\mathbf{v}}_c, \frac{\partial G}{ \partial b_{i}} \right\rangle = (\Tilde{\mathbf{v}}_c)_i
\end{equation*}
which means the term $\langle \mathbf{v}_c, \mathbf{b} \rangle$ appears in $g(z,p)$. 

Finally, we compute the coefficient of the cross-term $g_{z \hat{b}}$ in the Lyapunov-Schmidt reduction. For computational convenience, we express the input vector as $\mathbf{b} = \sum_{i = 1}^{N_a} \beta_i \mathbf{v}_i$ where each $\beta_i := \langle \mathbf{v}_i, \mathbf{b}\rangle$. Coefficients of the cross-terms $z \beta_i$ in $g(z, \mathbf{b})$ simplify to

\begin{multline}
    g_{z \beta_i} = \left\langle \Tilde{\mathbf{v}}_c, -d^2 G_{\mathbf{y}_{p},\mathbf{b}_{p}} \left(\Tilde{\mathbf{v}}_c, J_{(0,u)}^{-1} E \left( \frac{\partial G}{\partial \beta_i}\right) \right)\right\rangle
\end{multline}
where $E$ is a projection onto the range of $J_{(0,\munderbar u)}$ and $J_{(0,\munderbar u)}^{-1}: \mathbf{v}_c^{\perp} \mapsto \mathds{R}^{N_a}$ is the inverse of to restriction of  $J_{(0,\munderbar u)}$ to the orthogonal complement to $\mathbf{v}_c$. We find that $J_{(0,u)}^{-1} E \left( \frac{\partial G}{\partial \beta_i}\right) = \frac{1}{\mu_i} (\mathbf{v}_i,\mathbf{0})$ and 
$ g_{z \beta_i} = -(\alpha + \lambda_c \gamma) K_i$
where each $K_i$ is the quantity in \eqref{eq:IP_condition}. Since $g_{z \beta_i} > 0$ for all $i$, we conclude that the eigenvalue of the equilibrium is monotonically increasing with $|<\mathbf{v}_c,\mathbf{b}>|$. By continuity of eigenvalues of the pertubred Jacobian,
it follows that the leading eigenvalue necessarily crosses zero as input is increased.
By \cite[Theorem 3.4.1]{guckenheimer2013nonlinear} this singularity must be a saddle-node bifurcation point, with bifurcation parameter $\hat{b} = \langle \mathbf{v}_c, \mathbf{b} \rangle$ and properties outlined in the statement of the theorem.

\end{proof}

\begin{corollary}\label{cor: cascade dual control}
The input magnitude $\| \mathbf{b} \|$ and its relative orientation $\mathbf{b} \angle \mathbf{v}_c := \langle \mathbf{v}_c, \mathbf{b} \rangle/\| \mathbf{b} \|$ can be both used as controls to trigger a network opinion cascade.
\end{corollary}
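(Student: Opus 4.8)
The plan is to deduce the corollary directly from Theorem~\ref{thm:saddle} by recognizing that the scalar bifurcation parameter $\hat b = \langle \mathbf{v}_c, \mathbf{b}\rangle$ identified there factors multiplicatively into the two candidate controls. Theorem~\ref{thm:saddle} establishes that, for $\munderbar u$ just below $u_c$, a saddle-node bifurcation occurs precisely at $|\langle \mathbf{v}_c, \mathbf{b}\rangle| = p$: for $|\langle \mathbf{v}_c, \mathbf{b}\rangle| < p$ the weakly opinionated equilibrium near $(\mathbf{0},\munderbar u\mathbf{1})$ coexists with a nearby second (threshold) equilibrium, at the critical value the two collide, and for $|\langle \mathbf{v}_c, \mathbf{b}\rangle| > p$ no nearby equilibrium survives, so the trajectory escapes the neighborhood of the weakly opinionated state and is driven to the strongly opinionated attractor. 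This escape is exactly the opinion cascade of Fig.~\ref{fig:cascade_traj}. Hence triggering a cascade is equivalent to driving the scalar $|\langle \mathbf{v}_c, \mathbf{b}\rangle|$ upward across the threshold $p$.

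By definition $\mathbf{b}\angle\mathbf{v}_c = \langle \mathbf{v}_c, \mathbf{b}\rangle/\|\mathbf{b}\|$, so $\langle \mathbf{v}_c, \mathbf{b}\rangle = \|\mathbf{b}\|\,(\mathbf{b}\angle\mathbf{v}_c)$ and therefore $|\langle \mathbf{v}_c, \mathbf{b}\rangle| = \|\mathbf{b}\|\,|\mathbf{b}\angle\mathbf{v}_c|$; the crossing condition reads $\|\mathbf{b}\|\,|\mathbf{b}\angle\mathbf{v}_c| = p$. I would then exhibit each variable as a control by fixing the other. Fixing a nonzero orientation $\mathbf{b}\angle\mathbf{v}_c = c_0$ and increasing the magnitude, the product crosses $p$ exactly at $\|\mathbf{b}\| = p/|c_0|$, so sweeping $\|\mathbf{b}\|$ through this value triggers the cascade, realizing $\|\mathbf{b}\|$ as a control. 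Symmetrically, fixing the magnitude $\|\mathbf{b}\| = B$ and increasing the alignment, the product crosses $p$ at $|\mathbf{b}\angle\mathbf{v}_c| = p/B$, so rotating $\mathbf{b}$ toward $\mathbf{v}_c$ triggers the cascade, realizing $\mathbf{b}\angle\mathbf{v}_c$ as a control. Lemma~\ref{lem:small input approx} additionally certifies that along either sweep $\|\mathbf{x}_s\|$ grows with $|\langle \mathbf{v}_c, \mathbf{b}\rangle|$, confirming a monotone approach toward the opinionated branch.

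The only subtlety, and the step I would take care over, is the feasibility of the orientation control: since $\mathbf{v}_c$ is a unit vector, Cauchy--Schwarz gives $|\mathbf{b}\angle\mathbf{v}_c| \le 1$, so $|\mathbf{b}\angle\mathbf{v}_c| = p/B$ is solvable only when $B \ge p$. I would therefore state the orientation result for a fixed magnitude exceeding the threshold $p$ of Theorem~\ref{thm:saddle}. No genuine obstacle remains beyond this bookkeeping: the full bifurcation analysis is already carried by Theorem~\ref{thm:saddle}, and the corollary is precisely the observation that its one-dimensional bifurcation parameter decomposes into the physically meaningful magnitude and orientation of the input, each of which independently suffices to push the system through the saddle-node point.
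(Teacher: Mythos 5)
Your proposal is correct and follows the same route as the paper, whose proof is the one-line observation that the bifurcation parameter $\langle \mathbf{v}_c, \mathbf{b} \rangle$ factors as $\|\mathbf{b}\|\,(\mathbf{b}\angle\mathbf{v}_c)$; you simply spell out the two sweeps explicitly. Your added Cauchy--Schwarz remark---that the orientation sweep can only reach the threshold when $\|\mathbf{b}\| \ge p$---is a sensible refinement the paper leaves implicit, but it does not change the argument.
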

\begin{proof}
This follows trivially by factoring out the magnitude of the input vector from the bifurcation parameter $\langle \mathbf{v}_c, \mathbf{b} \rangle$. 
\end{proof}
Figure~\ref{fig:coupled_bif} illustrates the prediction of Corollary~\ref{cor: cascade dual control}. In particular, the figure shows bifurcation diagrams with stable and unstable equilibria of the opinion dynamics in the agreement regime on a small network. The two diagrams illustrate how with $\| \mathbf{b} \|$ and $\mathbf{b} \angle \mathbf{v}$ as bifurcation parameters, the saddle-node bifurcation predicted by Theorem \ref{thm:saddle} is observed. Opinion cascades are activated when the bifurcation parameter passes the critical value. Although the predictions of the results in this section assume inputs are small, in simulation and through numerical continuation of the dynamics on different networks we observe that this result is quite robust. The existence of a saddle-node bifurcation, and therefore a threshold which differentiates between inputs which trigger a cascade and ones which do not, persists across network structures and for large inputs.

\begin{figure}
    \centering
    \includegraphics[width=0.9\linewidth]{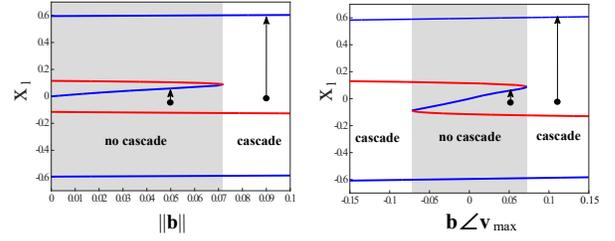}
    \caption{Blue (red) lines track the first coordinate of the stable (unstable) equilibrium solutions of the coupled dynamics \eqref{eq:coupled_dynamics_vectors} on a 3-agent undirected line graph. Parameters: $u_{th} = 0.1$, $\gamma = 1$, $n = 3$, $\mathbf{b} = \|\mathbf{b}\| \cdot |\mathbf{b} \angle \mathbf{v}_{max} | \cdot \mathbf{v}_{max} $; left: $|\mathbf{b} \angle \mathbf{v}_{max} | = 0.1$, right: $\|\mathbf{b}\| = 0.1$  Bifurcation diagrams  generated  using MatCont 
    \cite{matcont}.}
    \label{fig:coupled_bif}
\end{figure}



\begin{figure}
    \centering
    \includegraphics[width=0.9\linewidth]{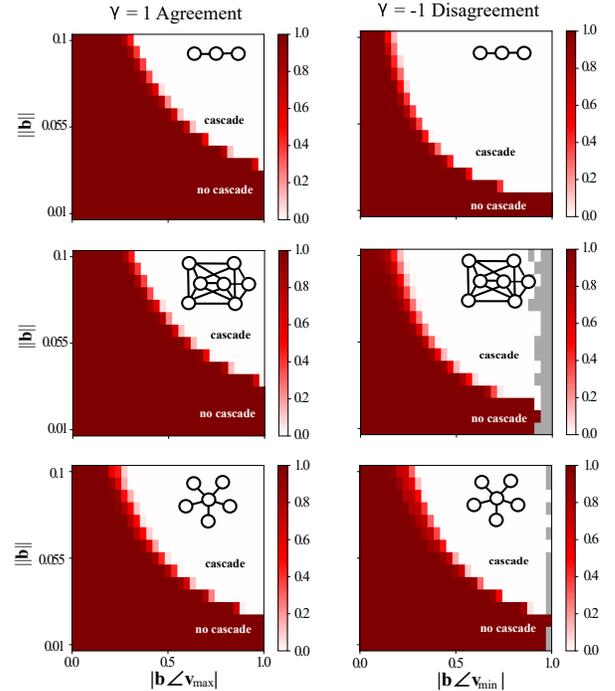}
    \caption{Heatmaps with color corresponding to proportion of simulations within the given parameter range which did not result in a network cascade by $t=500$. Dark red corresponds to no cascades, white corresponds to there always being a cascade, and grey squares are bins with no datapoints. Each plot corresponds to $1.5\times 10^5$ distinct simulations on an undirected graph shown in the diagram. Simulation parameters: $\tau_u = 10$, $u_{th} = 0.2$, $\underline u = u_{a} - 0.01$ for $\gamma = 1$ (left plots) and  $\underline u = u_{d} - 0.01$ for $\gamma = -1$ (right plots). For each simulation, inputs $b_i$ were drawn from $\mathcal{N}(0,1)$ and the input vector $\mathbf{b}$ was normalized to a desired magnitude. There were 10000 simulations performed at each constant input magnitude, with 15 magnitudes sampled uniformly spaced between $0$ and $0.1$. The initial conditions for each simulation were $x_i = 0$, $u_i = 0$ for all $i = 1, \dots, N_a$. }
    \label{fig:cascade_sims}
\end{figure}

A consequence of Theorem~\ref{thm:saddle} is that also for opinion cascades the node centrality indices are the key determinant of the effect of exogenous inputs on the coupled attention-opinion dynamics~\eqref{eq:coupled_dynamics_vectors}. More precisely, the smaller the angle between the input vector and the agreement or disagreement centrality vector, the smaller the needed input strength to trigger an agreement or a disagreement opinion cascade. Figure~\ref{fig:cascade_sims} numerically illustrates our theoretical prediction. The transition line from the red (no cascade) to the white (cascade) regions correspond to the threshold, i.e., the saddle node bifurcation predicted by Theorem~\ref{thm:saddle}, at which the opinion cascade is ignited. For disparate network topologies and both for agreement and disagreement opinion cascades, it shows that as the angle between the input vector and the centrality vector decreases, the norm of the input needed to trigger a cascade gets smaller. 

For the simulations shown in Figure \ref{fig:cascade_sims}, in the cascade region of the simulations we confirmed numerically that the centrality eigenvector accurately predicts the sign distribution among the nodes. Rigorously proving this is subject of future work. Although we do not explore it in this paper, we additionally note that the cascade threshold is implicitly defined by the design parameter $y_{th}$ in the attention saturation function \eqref{eq:Su}. In future work we will explore how the sensitivity of the group to distributed input can be tuned with this parameter. Other future directions include expanding the analysis presented here to multi-option cascades with the general opinion dynamics model in \cite{Bizyaeva2020} and to connect it to other continuous time and state-space models of opinion cascades such as \cite{zhong2019continuous}. 



\bibliographystyle{./bibliography/IEEEtran}
\bibliography{./bibliography/references}

\end{document}